\theoremstyle{plain}
\newtheorem{thm}{Theorem}
\newtheorem{pp}[thm]{Proposition}
\theoremstyle{definition}
\newtheorem{cj}[thm]{Conjecture}
\newtheorem{df}[thm]{Definition}
\newtheorem{ex}[thm]{Example}
\theoremstyle{remark}
\newtheorem{re}[thm]{Remark}
\newcommand{\sF}{\mathcal{F}}
\newcommand{\sI}{\mathcal{I}}
\newcommand{\sJ}{\mathcal{J}}
\newcommand{\sO}{\mathcal{O}}
\newcommand{\CP}{\mathbb{P}}
\newcommand{\reg}{\mathop{\rm reg}\nolimits}
\newcommand{\lct}{\mathop{\rm lct}\nolimits}
\newcommand{\codim}{\mathop{\rm codim}\nolimits}
\begin{document}

\title{Castelnuovo--Mumford regularity\\and log-canonical thresholds}
\author{Alex K\"uronya and Norbert Pintye}
\address{Budapest University of Technology and Economics \\ Mathematical Institute, Department of Algebra \\ P. O. Box 91 \\ H-1521 Budapest, Hungary}
\email{alex.kuronya@math.bme.hu}  
\email{pintye@math.bme.hu}
\subjclass{}
\keywords{}

\begin{abstract}
We prove a sharp inequality relating the Castelnuovo--Mumford regularity of a coherent ideal sheaf to its log-canonical threshold via multiplier ideals. 
\end{abstract}

\maketitle


\setcounter{section}{1}

Ever since the appearance of the influential work~\cite{BM}, it has been a generally accepted principle that the Castelnuovo--Mumford regularity
of a coherent sheaf governs its computational complexity. The main purpose of this short note is to point out how regularity 
influences the log-canonical threshold, an invariant of ideal sheaves arising from birational geometry, which measures 
the singularities of the scheme defined by the ideal in question.

\mbox{}\par\noindent
For a coherent sheaf of ideals $\sI\subseteq\sO_{\CP^n}$, a clean version of the  relation takes the form of the sharp inequality
\[
1\leq\lct(\sI)\reg(\sI) 
\]
as proven in Theorem~\ref{thm:lctreg}. As a by-product of the proof, we formulate a conjecture on the regularity of the integral closure of such ideal sheaves: 
\[
 \reg (\overline{\sI}) \,\leq\, \reg(\sI).
\]

\noindent
We start by briefly recalling some prerequisites. For more thorough information and additional resources, the reader is referred to the appropriate parts of~\cite{PAG1,PAG2}. Until further notice, $X$ is assumed to be a smooth projective variety and $\sI\subseteq\sO_X$ to be a non-zero sheaf of ideals.

\begin{df}
Given an ample basepoint-free divisor $A$ on $X$, we say that a coherent sheaf of $O_X$-modules $\sF$ is \emph{$m$-regular with respect to $A$}\ if
\[H^i(X,\sF\otimes\sO_X((m-i)\cdot A))=0>0.\]
The \emph{Castelnuovo--Mumford regularity}\ of $\sF$ (with respect to $A$), denoted by $\reg_A(\sF)$, is the least integer $m$ for which $\sF$ is $m$-regular, or $-\infty$ if $\sF$ is $m$-regular for all $m\ll 0$. In case of a hyperplane divisor $H$ on $X=\CP^n$, we simply write $\reg(\sF)$ for $\reg_H(\sF)$.
\end{df}

\begin{df}
A \emph{log resolution}\ of the pair $(X,\sI)$ is a projective birational map $\mu:X'\longrightarrow X$ such that $X'$ is non-singular, $\sI\cdot\sO_{X'}=\sO_{X'}(D)$ for an effective divisor $D$, and $D+\mathop{\rm Exc}(\mu)$ has simple normal crossing support. Now, fix such a resolution, and let $c>0$ be a rational number. The \emph{multiplier ideal} $\sJ(c\cdot\sI)$ is defined as \[\sJ(c\cdot\sI)=\mu_*\sO_{X'}(K_{X'/X}-\lfloor c\cdot D\rfloor)\subseteq\sO_X,\] where $K_{X'/X}$ is the relative canonical divisor. It can be shown that this sheaf is independent of the log resolution chosen~\cite[Theorem 9.2.18]{PAG2}. If $c=1$, we usually omit it from the notation.
\end{df}

\begin{thm}[Nadel vanishing]
Let $A$ and $L$ be integral divisors on $X$ such that $\sI\otimes\sO_X(A)$ is globally generated, and $L-c\cdot A$ is big and nef for some rational number $c>0$. Then
\[H^i(X,\sJ(c\cdot\sI)\otimes\sO_X(K_X+L))=0>0.\]
\end{thm}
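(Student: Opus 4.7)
The plan is to pass to a log resolution $\mu\colon X'\to X$ of $\sI$, writing $\sI\cdot\sO_{X'}=\sO_{X'}(-F)$ with $F$ effective and $F+\mathrm{Exc}(\mu)$ of simple normal crossing support. Directly from the definition of the multiplier ideal together with the projection formula, one has
\[
\sJ(c\cdot\sI)\otimes\sO_X(K_X+L) \;\cong\; \mu_*\,\sO_{X'}\bigl(K_{X'}+\lceil\mu^*L-cF\rceil\bigr),
\]
so the Leray spectral sequence reduces the theorem to two separate vanishings on $X'$: a \emph{local} one, $R^j\mu_*\sO_{X'}(K_{X'}+\lceil\mu^*L-cF\rceil)=0$ for every $j>0$, and a \emph{global} one, $H^i(X',\sO_{X'}(K_{X'}+\lceil\mu^*L-cF\rceil))=0$ for every $i>0$.

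Both vanishings should follow from Kawamata--Viehweg-type statements applied to the $\mathbb{Q}$-divisor $N := \mu^*L-cF$. Its fractional part is carried by $F$, since $\mu^*L$ is integral, so the SNC hypothesis is automatic from the choice of resolution. For the positivity of $N$ I would use the decomposition
\[
\mu^*L-cF \;=\; \mu^*(L-cA)\,+\,c\bigl(\mu^*A-F\bigr).
\]
The first summand is big and nef, being the pullback of a big and nef divisor across a birational morphism. For the second, the global generation of $\sI\otimes\sO_X(A)$ pulls back to a surjection of $\sO_{X'}$-modules onto $\sO_{X'}(\mu^*A-F)$, so this line bundle is globally generated on $X'$ and in particular nef. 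Summing, $N$ is big and nef with SNC fractional part, and the global vanishing on $X'$ is then immediate from Kawamata--Viehweg.

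The subtler step is the local vanishing, which demands relative positivity of $N$ over $X$. I would again use the decomposition above: $\mu^*(L-cA)$ is numerically $\mu$-trivial, while $c(\mu^*A-F)$ is globally generated and hence relatively generated by global sections over $X$, so $N$ is both $\mu$-nef and $\mu$-semiample. This is exactly the input needed for the relative Kawamata--Viehweg vanishing; alternatively, one may just invoke the standard local vanishing theorem for multiplier ideals from~\cite[Theorem~9.4.1]{PAG2} and tensor $\sO_X(K_X+L)$ back in via the projection formula. Combining the two vanishings through the Leray spectral sequence then yields $H^i(X,\sJ(c\cdot\sI)\otimes\sO_X(K_X+L))=0$ for all $i>0$, as claimed.
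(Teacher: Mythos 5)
The paper offers no proof of this statement at all---it simply refers the reader to \cite[Corollary 9.4.13]{PAG2}---and your argument is exactly the standard proof found there: identify $\sJ(c\cdot\sI)\otimes\sO_X(K_X+L)$ with $\mu_*\sO_{X'}(K_{X'}+\lceil\mu^*L-cF\rceil)$ via the projection formula, split $\mu^*L-cF$ as the pullback of the big and nef divisor $L-cA$ plus the globally generated (hence nef and $\mu$-semiample) piece $c(\mu^*A-F)$, and combine global and relative Kawamata--Viehweg vanishing through the Leray spectral sequence. The details are correct, so there is nothing to add.
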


\noindent For the proof, see~\cite[Corollary 9.4.13]{PAG2}.

\begin{df}
The \emph{log-canonical threshold}\ of $\sI\subset\sO_X$, denoted by $\lct(\sI)$, is defined to be the smallest rational number $c>0$ such that $\sJ(c\cdot\sI)\neq\sO_X$.
\end{df}

\noindent Although the regularity of non-singular varieties is known or expected to be linear in terms of geometric invariants, in certain other cases, especially those of highly singular, it may grow doubly exponentially as a function of the input parameters~\cite[Theorem 3.7]{BM}. However, it is still not known how one can decide which properties influence regularity. It is widely accepted that the smaller the log-canonical threshold, the worse are the singularities~\cite{Kollar}. In the following theorem, we state an inequality that relates these invariants. 

\begin{thm}\label{thm:lctreg}
Let $X=\CP^n$. For a (non-zero) coherent sheaf of ideals $\sI\subseteq\sO_X$,
\[1\leq\lct(\sI)\reg(\sI).\]
\end{thm}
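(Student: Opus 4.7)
The plan is to prove that $\sJ(c\cdot\sI) = \sO_{\CP^n}$ for every rational $c$ in the interval $(0,\,1/\reg(\sI))$; this is equivalent to $\lct(\sI) \geq 1/\reg(\sI)$, which gives the stated inequality. Write $r = \reg(\sI)$. The first ingredient is the standard fact that $r$-regularity on $\CP^n$ implies $\sI \otimes \sO_{\CP^n}(rH)$ is globally generated, which is precisely the hypothesis needed in order to apply Nadel vanishing with $A = rH$.

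The main calculation is an application of Nadel vanishing once for each integer $k \in \{1,2,\dots,n\}$, taking $L_k = kH$. Since $cr < 1 \leq k$, the divisor $L_k - c\cdot A = (k - cr)H$ is ample on $\CP^n$, hence in particular big and nef. Substituting into Nadel vanishing and using $K_{\CP^n} = -(n+1)H$ yields
\[
H^i\bigl(\CP^n,\,\sJ(c\cdot\sI)\otimes\sO_{\CP^n}((k-n-1)H)\bigr) \;=\; 0 \quad\text{for every } i > 0.
\]
As $k$ runs through $1,\dots,n$, the twist $k-n-1$ runs through $-n,\dots,-1$, and we read off the vanishings $H^i(\sJ(c\cdot\sI)(-iH)) = 0$ for each $i \in \{1,\dots,n\}$. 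This is exactly the defining condition for $\sJ(c\cdot\sI)$ to be $0$-regular in the sense of Castelnuovo--Mumford.

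A $0$-regular coherent sheaf on $\CP^n$ is globally generated by its space of global sections. Since $\sJ(c\cdot\sI) \subseteq \sO_{\CP^n}$, one has $H^0(\CP^n, \sJ(c\cdot\sI)) \subseteq H^0(\CP^n, \sO_{\CP^n}) = \mathbb{C}$; and because the multiplier ideal is nonzero (it coincides with $\sO_{\CP^n}$ away from the zero scheme of $\sI$), global generation forces the constant section $1$ to lie in $H^0(\sJ(c\cdot\sI))$, whence $\sJ(c\cdot\sI) = \sO_{\CP^n}$. Letting $c$ approach $1/r$ from below gives $\lct(\sI) \geq 1/r$, which is the desired inequality.

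The only non-elementary input is Nadel vanishing; the pleasing feature of the argument is that the $n$ permissible choices of $L$ pair up exactly with the $n$ cohomological conditions defining $0$-regularity. The one step that deserves a moment of care is the final implication: one must use that the multiplier ideal is a \emph{nonzero} sub-ideal of $\sO_{\CP^n}$ in order to rule out the degenerate possibility $H^0(\sJ(c\cdot\sI)) = 0$, which in isolation would also be consistent with global generation.
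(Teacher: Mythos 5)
Your proof is correct and follows essentially the same route as the paper: both arguments feed Mumford's global generation of $\sI\otimes\sO(\reg(\sI)\cdot H)$ into Nadel vanishing with $K_{\CP^n}=-(n+1)H$ to control the regularity of the multiplier ideal. The only difference is packaging --- the paper proves the general bound $\reg(\sJ(c\cdot\sI))\leq\lfloor c\reg(\sI)\rfloor$ and evaluates it at $c=\lct(\sI)$, whereas you run the contrapositive, showing $\sJ(c\cdot\sI)$ is $0$-regular, hence trivial, for $c<1/\reg(\sI)$, and along the way you supply an explicit global-generation argument for the step the paper merely asserts (that a $0$-regular nonzero ideal sheaf on $\CP^n$ must be $\sO_{\CP^n}$).
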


\begin{ex}
The inequality in the theorem is sharp (though not effective -- anyhow, see our notes following the proof). For example, let $\sI=\sO(-H)$ for a hyperplane divisor $H$ on $X$. Then $\lct(\sI)=1$ by Howald's Theorem~\cite{Howald}, and $\reg(\sI)=1$ as well.
\end{ex}

\noindent
The proof goes through a regularity estimate for multiplier ideals. 

\begin{pp}\label{pp:regm}
Let $X=\CP^n$. Given a (non-zero) coherent sheaf of ideals $\sI\subseteq\sO_X$ and a rational number $c>0$, we have the inequality
\[\reg(\sJ(c\cdot\sI))\leq\lfloor c\reg(\sI)\rfloor.\]
In particular, $\reg(\sJ(\sI))\leq\reg(\sI)$.
\end{pp}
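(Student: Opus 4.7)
The plan is to invoke Nadel vanishing directly, using the standard fact that on $\CP^n$, $m$-regularity of a coherent sheaf $\sF$ forces $\sF\otimes\sO(m)$ to be globally generated. This supplies the global generation hypothesis in Nadel's theorem with the sharpest twist of $\sI$, and the rest is bookkeeping.

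Set $r := \reg(\sI)$ and $m := \lfloor cr\rfloor$. Since $\sI\otimes\sO(r)$ is globally generated, I apply Nadel vanishing with $A := rH$. To establish $m$-regularity of $\sJ(c\cdot\sI)$, I need
\[
H^i\bigl(\CP^n,\,\sJ(c\cdot\sI)\otimes\sO(m-i)\bigr) \,=\, 0 \quad\text{for every } i > 0.
\]
Because $K_{\CP^n} = -(n+1)H$, taking $L := (m-i+n+1)H$ gives $K_{\CP^n}+L = (m-i)H$, so the target vanishing is exactly the conclusion of Nadel's theorem applied with this $L$.

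What is left is to verify that $L - cA = (m + n + 1 - i - cr)H$ is big and nef, which on $\CP^n$ reduces to strict positivity of the coefficient. For $1 \leq i \leq n$, the estimate $cr - m = cr - \lfloor cr\rfloor < 1 \leq n + 1 - i$ rearranges to precisely $m + n + 1 - i - cr > 0$, so Nadel vanishing applies and the bound on $\reg(\sJ(c\cdot\sI))$ follows. The ``in particular'' assertion is immediate: for $c = 1$ we have $m = \lfloor r\rfloor = r$ since $r$ is an integer.

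I do not anticipate a genuine obstacle. The only calibration to watch is that the coefficient of $L - cA$ stays strictly positive as $i$ ranges through $1,\ldots,n$, which is exactly what the floor function guarantees. Conceptually, Nadel's hypothesis dovetails cleanly with the definition of regularity, so the argument is essentially automatic once $A$ and $L$ are chosen as above.
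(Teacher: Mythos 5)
Your proof is correct and follows essentially the same route as the paper: apply Mumford's theorem to get global generation of $\sI\otimes\sO(\reg(\sI))$, then invoke Nadel vanishing with $A=\reg(\sI)\cdot H$ and $L$ a suitable multiple of $H$, using the floor-function estimate to verify that $L-cA$ is ample. The only cosmetic difference is that you let $L$ vary with $i$, whereas the paper fixes the minimal $L$ and adds $kH$, $k\geq 0$; the two bookkeeping schemes are equivalent.
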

\begin{proof}
Suppose that $\sI$ is $m$-regular, and let $H$ be a hyperplane divisor on $X$. Now, on one hand, Mumford's Theorem states that $\sI\otimes\sO(m\cdot H)$ is globally generated, on the other hand, $K_X=-(n+1)\cdot H$. Therefore, we may choose $A=m\cdot H$ and $L$ to be a multiple of $H$, and then apply Nadel's vanishing theorem to obtain an upper bound on $\reg(\sJ(c\cdot\sI))$.

\mbox{}\par
\noindent With this in mind, define $L$ to be the integral divisor $(\lfloor cm\rfloor+1)\cdot H$. Since $H$ is ample, and $(\lfloor cm\rfloor+1)-cm>0$, it follows that $L-c\cdot A$ is ample and, in particular, is big and nef. The higher cohomology groups of $\sJ(c\cdot\sI)\otimes\sO_X(K_X+L)=\sJ(c\cdot\sI)\otimes\sO_X((\lfloor cm\rfloor-n)\cdot H)$ thus vanish. Since $L'=L+k\cdot H$ yields a similar vanishing for all integer $k\geq 0$, it follows that $\sJ(c\cdot\sI)$ is $\lfloor cm\rfloor$-regular, that is, $\reg(\sJ(c\cdot\sI))\leq\lfloor c\reg(\sI)\rfloor$, as we claimed.
\end{proof}

\begin{re}
This proof relies on the fact that the canonical divisor is a multiple of an ample divisor in the case of projective spaces. This is, of course, false in general. However, given an ample divisor $H$ on $X$, we can always find an integer $k$ such that $(k+\varepsilon)\cdot H-K_X$ is ample for every rational number $\varepsilon>0$. Suppose that $H$ is even integral and basepoint-free, and let $L=(k+\lfloor cm\rfloor+1)\cdot H-K_X$ with $k$ accordingly. Then, proceeding in the same way as before, one can show that
\[\reg_H(\sJ(c\cdot\sI))\leq k+\lfloor c\reg_H(\sI)\rfloor+1+\dim X.\]
\end{re}

\begin{proof}[Proof of Theorem~\ref{thm:lctreg}]
Since we are working over $X=\CP^n$, the regularity of an ideal sheaf $\sI\subseteq O_X$ is always non-negative. Moreover, it is zero if and only if $\sI=O_X$. Let $c=\lct(\sI)$. Then, by definition, $\sJ(c\cdot\sI)\neq\sO_X$, therefore, using Proposition~\ref{pp:regm}, our claim immediately follows from
\[1\leq\reg(\sJ(c\cdot\sI))\leq\lfloor c\reg(\sI)\rfloor\leq\lct(\sI)\reg(\sI).\]
\end{proof}

\begin{re}
In the proof of Proposition~\ref{pp:regm}, we use only that $\sI\otimes\sO(m\cdot H)$ is globally generated. If we let $m=d(\sI)$ be the least such integer, the inequality of Theorem~\ref{thm:lctreg} can be improved to 
\[
1\leq\lct(\sI)\mskip1mu d(\sI).
\]
Though this is weaker than the inequality $\codim(\sI)\leq\lct(\sI)\mskip1mu d(\sI)$ proven in~\cite[Corollary 3.6]{FEM} using jet schemes, it turns out that a minor modification of our approach gives  lower bounds that are much more precise than the one coming from~\cite{FEM}. 

\mbox{}\par
\noindent Let $c=\lct(\sI)$. First, note that $\codim(\sI)\leq\reg(\sJ(c\cdot\sI))$ cannot hold in general. Consider for instance $\sI=\widetilde{I}$ with $I=(x_0^2,x_1x_2^2)\subset\mathbb{C}[x_0,x_1,x_2]$; then $\codim(\sI)=2$, while $\reg(\sJ(c\cdot\sI))=1$. However, the following minor modification of our method leads to more precise estimates. Let $c_k=kc$ for $k>0$. Then
\[
\limsup_{k>0}\frac{\reg(\sJ(c_k\cdot\sI))}{k}\leq c\cdot d(\sI)=\lct(\sI)\mskip1mu d(\sI).
\]
Going back to our example, if we compute the fraction for $k=2$, we can already see that the limit superior is at least $2$, immediately yielding the same bound as the codimension. On the other hand, the bound coming from multiplier ideals is often much stronger than the codimension. For a simple illustration, take $I=(x_0\cdots x_n)\subset\mathbb{C}[x_0,\ldots,x_n]$. Then, we have 
\[
\codim(\sI)=1,\ \text{whereas }\reg(\sJ(c\cdot\sI))=c\cdot d(\sI)=n+1.  
\]
\end{re}


One of the important algebraic properties of multiplier ideals is that they are integrally closed being, in fact, very close
to the integral closure of the ideal given. The regularity estimate for multiplier ideals suggests that a similar result might hold for integral
closures. 

\begin{df}\label{def:intcl}
Let $\nu:Y\longrightarrow X$ be the normalised blow-up of $\sI$ with (a not necessarily irreducible) exceptional divisor $E=\mathop{\rm Exc}(\nu)$, so that $\sI\cdot\sO_Y=\sO_Y(-E)$. The \emph{integral closure}\ of $\sI$ is then defined to be the ideal sheaf
\[\overline{\sI}=\nu_*\sO_Y(-E)\subseteq\overline{\sO_X}=\sO_X.\]
We have the inclusion $\sI\subseteq\overline{\sI}$, and in case of equality, we say that $\sI$ is integrally closed.
\end{df}

\begin{re}
It can be shown that $\sJ(c\cdot\overline{\sI})=\sJ(c\cdot\sI)$ for all rational numbers $c>0$ by taking the normalised blow-up as the first step in a log resolution. Moreover, $\overline{\sI}\subseteq\sJ(\sI)$. Thus, based on Proposition~\ref{pp:regm}, the question is then how $\reg(\overline{\sI})$ and $\reg(\sI)$ are related. Note that, for any ideal $I\subseteq\mathbb{C}[x_0,\ldots,x_n]$, there is an algebraic way to define its integral closure $\overline{I}$, and if $I$ is homogeneous, then we have the corresponding notion of its regularity $\reg(I)$ as well. Now, if $X=\CP^n$ and $\sI=\widetilde{I}$ with $I$ saturated, then $\reg(I)=\reg(\sI)$ and $\overline{I}=\bigoplus_{d\geq 0}H^0(X,\overline{\sI}\otimes\sO_X(d))$.{\parfillskip=0pt\par}
\end{re}

\begin{cj}\label{conj:RegularityOfIntegralClosure}
Let $X=\CP^n$. If a (non-zero) coherent sheaf of ideals $\sI\subseteq\sO_X$ is $m$-regular, then so is $\overline{\sI}$. In particular, $\reg(\overline{\sI})\leq\reg(\sI)$.
\end{cj}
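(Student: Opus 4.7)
The plan is to combine Proposition~\ref{pp:regm} with a direct analysis on a log resolution. Fix a log resolution $\mu: X' \to X$ of $(X,\sI)$ with $\sI\cdot\sO_{X'}=\sO_{X'}(-D)$; by factoring through the normalised blow-up of Definition~\ref{def:intcl}, we have
$$\overline{\sI} \;=\; \mu_*\sO_{X'}(-D),\qquad \sJ(\sI) \;=\; \mu_*\sO_{X'}(K_{X'/X}-D).$$
If $\sI$ is $m$-regular, Mumford's theorem implies that $\sI\otimes\sO_X(mH)$ is globally generated, and hence $\mu^*(mH)-D$ is globally generated on $X'$, in particular nef and, for $m>0$, big.

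The first step is to pass to the short exact sequence
$$0 \longrightarrow \sI \longrightarrow \overline{\sI} \longrightarrow Q \longrightarrow 0,$$
with $Q$ supported on $V(\sI)\subseteq X$. Invoking $m$-regularity of $\sI$ and Mumford's theorem on the resulting long cohomology sequence yields isomorphisms
$$H^i\bigl(X,\,\overline{\sI}\otimes\sO_X((m-i)H)\bigr) \;\cong\; H^i\bigl(X,\,Q\otimes\sO_X((m-i)H)\bigr)\qquad(i\geq 1),$$
so the conjecture reduces to showing that $Q$ is $m$-regular. Since $\overline{\sI}\subseteq\sJ(\sI)$, we have $Q\hookrightarrow\sJ(\sI)/\sI$, and Proposition~\ref{pp:regm} gives $\reg(\sJ(\sI))\leq m$; but regularity is not monotone under subsheaves, so extra input is needed to conclude from here.

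A promising refinement exploits the family $\sJ(c\sI)$ for rational $0<c\leq 1$: these form a descending chain interpolating between $\sO_X$ and $\sJ(\sI)$, each with $\reg(\sJ(c\sI))\leq\lfloor cm\rfloor$, and one might try to isolate $\overline{\sI}$ using the finite set of jumping numbers of $\sI$. Alternatively, induction on $n=\dim X$ seems natural: for a sufficiently general hyperplane $H_0\subset\CP^n$, formation of the integral closure commutes with restriction and regularity does not increase, so together with the standard sequence $0\to\overline{\sI}(-1)\to\overline{\sI}\to\overline{\sI}|_{H_0}\to 0$ one could reduce the claim to a low-dimensional base case.

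The principal obstacle is the absence of a Kawamata--Viehweg-type vanishing for $\sO_{X'}(-D)$: unlike $\sO_{X'}(K_{X'/X}-D)$, this line bundle carries no relative canonical twist, and the higher direct images $R^j\mu_*\sO_{X'}(-D)$ need not vanish. Consequently, the Leray spectral sequence does not directly reduce cohomology on $X$ to cohomology on $X'$, where the nefness of $\mu^*(mH)-D$ could be leveraged. Overcoming this seems to require either a finer positivity input from the normalised blow-up $\nu:Y\to X$ (where $-E$ is $\nu$-ample, although $Y$ is only normal and possibly singular), or a genuinely algebraic argument on normalised Rees algebras.
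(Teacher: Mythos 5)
You have not proved the statement, and neither does the paper: this is Conjecture~\ref{conj:RegularityOfIntegralClosure}, posed as an open problem, and the paper offers no argument for it beyond the heuristic that $\overline{\sI}\subseteq\sJ(\sI)$ together with Proposition~\ref{pp:regm} makes the inequality plausible. Your write-up is candid about this, and to your credit you put your finger on exactly the obstruction that separates the proved Proposition~\ref{pp:regm} from the conjectured statement: the multiplier ideal $\sJ(\sI)=\mu_*\sO_{X'}(K_{X'/X}-D)$ carries the relative canonical twist, so local (Kawamata--Viehweg type) vanishing kills the higher direct images $R^j\mu_*\sO_{X'}(K_{X'/X}-D)$ and the Leray spectral sequence collapses, letting one run Nadel vanishing on $X$; for $\overline{\sI}=\mu_*\sO_{X'}(-D)$ there is no such twist, the $R^j\mu_*\sO_{X'}(-D)$ need not vanish, and the nefness of $\mu^*(mH)-D$ cannot be transported down to $X$. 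That diagnosis is correct and is precisely why the authors could only state this as a conjecture.

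None of your three proposed routes closes the gap. The reduction via $0\to\sI\to\overline{\sI}\to Q\to 0$ is sound as far as it goes (the isomorphisms $H^i(\overline{\sI}((m-i)H))\cong H^i(Q((m-i)H))$ for $i\geq 1$ do follow from $m$-regularity of $\sI$ and Mumford's theorem), but it merely renames the problem: you have no handle on the cohomology of $Q$, and as you note, the inclusion $Q\hookrightarrow\sJ(\sI)/\sI$ is useless because regularity is not monotone under subsheaves. The jumping-number refinement is only a suggestion with no mechanism for isolating $\overline{\sI}$ among the $\sJ(c\cdot\sI)$ (in general $\overline{\sI}$ is not a multiplier ideal of $\sI$ at all). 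The hyperplane induction founders on the unproved claim that formation of integral closure commutes with restriction to a general hyperplane; this fails in general and would itself require justification, and even granting it, the restriction sequence controls only part of the cohomology. So the proposal should be read as a useful map of the difficulty, not as progress toward a proof; any genuine attack will need the finer positivity on the normalised blow-up $\nu:Y\to X$ (where $-E$ is $\nu$-ample but $Y$ may be singular) or an algebraic argument on the Rees algebra, as you say in your closing sentence.
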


\begin{re}
Our conjecture admits a generalisation to integral extensions, which, as pointed out in~\cite{VP}, can be completely described by the corresponding inverse image ideal sheaves under the normalised blow-up. More precisely, we have that $\sI\subseteq\sJ$ is an integral extension if and only if $\sI\cdot\sO_Y=\sJ\cdot\sO_Y$. Now, the conjecture reads as follows: if a (non-zero) coherent sheaf of ideals is $m$-regular, then so is any of its integral extensions.
\end{re}

\pagebreak
\begin{re}
For some time, it had been an open question whether $\reg(\sqrt{I})\leq\reg(I)$. The answer given in~\cite{Chardin} was negative -- taking the radical of an ideal may increase the regularity. However, the ideals (in fact, families of ideals) in question are complete intersections, and thus integrally closed. The following example, therefore, might be of interest.
\end{re}

\begin{ex}
Let $S=\mathbb{C}[x_0,x_1,x_2,x_3,x_4]$, and consider the saturated homogeneous ideal
\[I=(x_0^2-x_4^2,x_1^2-x_4^2,x_0x_3^3-x_2^3x_4)\subseteq S.\]
Then $\reg(I)=6$, $\reg(\sqrt{I})=7$, and $\reg(\overline{I})=5$.
\end{ex}

\begin{re}
Due to Brian\c{c}on--Skoda, the regularity functions of $\sI$ and $\overline{\sI}$ (that is, $a\mapsto\reg(\sI^a)$) should be pretty close, even asymptotically the same. Indeed, this is a nice consequence of the theory of $s$-invariants. Though these functions need not be linear, since $s(\sI)=s(\overline{\sI})$, they have the same slope~\cite[Theorem B and Proposition 1.11]{CEL}.
\end{re}

\subsubsection*{Acknowledgements}
Alex K\"uronya was partially supported by the DFG-Forschergruppe 790 ``Classification of Algebraic Surfaces and Compact Complex Manifolds'', by the DFG-Graduier\-ten\-kol\-leg 1821 ``Cohomological Methods in Geometry'', and by the OTKA grants 77476 and 81203 of the Hungarian Academy of Sciences.\par
\mbox{}\par
\noindent Norbert Pintye was partially supported by the grants T\'AMOP-4.2.1/B-09/1/KMR-2010-0002 and T\'AMOP-4.2.2/B-10/1-2010-0009.
\mbox{}\par
\mbox{}\par
\noindent Helpful discussions with Lawrence Ein are gratefully acknowledged.


\bibliographystyle{plain}
\bibliography{article}

\end{document}